\newtheorem{thm}{Theorem}[section]
\newtheorem{prop}[thm]{Proposition}
\theoremstyle{definition}
\newtheorem{defn}[thm]{Definition}
\newtheorem{exmp}[thm]{Example}
\newtheorem{notn}[thm]{Notation}
\theoremstyle{remark}
\newtheorem{rem}[thm]{Remark}
\let\c@equation\c@thm
\numberwithin{equation}{section}
\title{Relation Functions Evaluated from Unique Coefficient Patterns}
\author{Alperen Sirin}
\address{Department of Mathematics\\
                University of Rochester\\
                }
\email{saidsirin@gmail.com}
\begin{document}

\begin{abstract}
In this paper, we study polynomials of the form $f(x)=(x^n+x^{n-1}+...+1)^l$ for $l=1,2,3,4$ to generate a pattern titled "unique coefficient pattern". Namely, we analyze each unique coefficient patterns of $f(x)$ and generate functions titled "relation functions". The approach that we follow will allow us to evaluate desired coefficients for such polynomial expansions by simply using these relation functions.
\end{abstract}

\maketitle

\section{Introduction}
For centuries, many mathematicians have studied the Pascal Triangle and generalized different coefficient patterns such as the trinomial triangle or the quadrinomial triangle. Consequently, first the binomial theorem, and then a more generalized version called multinomial theorem was established to find any coefficient of any polynomial expansions. In this paper however, we will demonstrate a new pattern that occurs from the unique coefficient of such polynomials.
\begin{defn}
A coefficient pattern is the set of coefficients from the expansion of $(x^n+x^{n-1}+...+1)^l$ for any positive integer $l$ and $n$.
\end{defn}
\subsection{Unique Coefficient Pattern}
\begin{defn}
Unique Coefficient Pattern, denoted as $U_r(k)$ is the set of coefficients of which a certain coefficient pattern have but preceding patterns dont.
\end{defn}
 For example, consider $f(x)=(x^n+...+1)^2$. Then, $U_1(2)=1$, $U_2(2)=2$,..., $U_r(2)=r$. Listing the set of $U_r(2)$, we have,
\[\begin{array}{ccccccccccc}
$n=1$&    &    &    &    &  1\\\noalign{\smallskip\smallskip}
$n=2$&    &    &    &    &  2\\\noalign{\smallskip\smallskip}
$n=3$&    &    &    &    &  3\\\noalign{\smallskip\smallskip}
\vdots \cr
\end{array}\] 
\[\begin{array}{ccccccccccc}
$n=r$&    &    &    &    & $r$\\\noalign{\smallskip\smallskip}
\end{array}\]
Similarly, lets consider $f(x)=(x^n+...+1)^3$. Then $U_1(3)=1$, $U_2(3)=(3,3)$, $U_3(3)=(6,7,6)$,... and so on. Listing the set of $U_r(3)$, we have the following pattern,
\[\begin{array}{ccccccccccc}
&    &    &    &    &  1\\\noalign{\smallskip\smallskip}
&    &    &    &  3  &  &  3\\\noalign{\smallskip\smallskip}
&    &    &  6 &    &  7 &    &  6\\\noalign{\smallskip\smallskip}
&    &  10 &    &  12 &    &  12 &    &  10\\\noalign{\smallskip\smallskip}
&  15 &    &  18 &    &  19 &    &  18 &    &  15
\dots \cr
\end{array}\]
Lastly, lets consider $f(x)=(x^n+...+1)^4$. Then, $U_1(4)=1$, $U_2(4)=(4,6,4)$, $U_3(4)=(10,16,19,16,10)$, ... and so on. In the same manner, listing the set of $U_r(4)$, we have,
\[\begin{array}{ccccccccccc}
&    &    &    &    &  1\\\noalign{\smallskip\smallskip}
&    &    &    &  4  & 6 &  4\\\noalign{\smallskip\smallskip}
&    &    &  10 &  16  & 19  & 16  &  10\\\noalign{\smallskip\smallskip}
&    &  20 & 31  &  40 & 44   &  40 &  31  & 20\\\noalign{\smallskip\smallskip}
\vdots \cr
\end{array}\]

\begin{notn}
Let $i_n=$specific term $n$, $i_{n_k}=$specific coefficient in term $n$ where $k=$row number, $g_l(i_{n_k})=$specific coefficient for the $l$'th power of which $n$ and $k$ correspond to.
\end{notn}
\begin{defn}
A relation function is namely $g_l(i_{n_k})$, which has a domain and range consisting of $Z_+^*$
\end{defn}

\section{Methods/Derivation}
\begin{prop}
The relation function to evaluate any unique coefficient for $l=2$ is
\[\ g_2(i_n)=n\]
\end{prop}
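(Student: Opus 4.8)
The plan is to first pin down the convention that is only implicit in the tables, and then reduce the statement to the explicit shape of the coefficients of a square. For $l=2$ the term $i_n$ is the coefficient pattern of $\bigl(x^{n-1}+x^{n-2}+\cdots+1\bigr)^{2}$ --- the square of a sum of $n$ consecutive powers of $x$ --- so that $i_1$ comes from $(1)^2$, $i_2$ from $(x+1)^2$, and so on; and $U_n(2)$ is the tuple of coefficients of $i_n$ that are strictly larger than every coefficient occurring in the previous pattern $i_{n-1}$ (this is the rule that reproduces $U_2(3)=(3,3)$, $U_3(4)=(10,16,19,16,10)$, and the other displayed rows). On this reading the proposition asserts precisely that $U_n(2)$ is the singleton $\{n\}$.

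The computational core is to write down the coefficients of $\bigl(\sum_{j=0}^{m}x^{j}\bigr)^{2}$ exactly. The coefficient of $x^{k}$ counts pairs $(a,b)$ with $0\le a,b\le m$ and $a+b=k$; the constraints $a\le m$ and $b=k-a\le m$ restrict $a$ to $\max(0,k-m)\le a\le\min(k,m)$, which gives $k+1$ for $0\le k\le m$ and $2m+1-k$ for $m\le k\le 2m$. (Equivalently, one reads this off from $\bigl(\tfrac{1-x^{m+1}}{1-x}\bigr)^{2}=(1-2x^{m+1}+x^{2m+2})\sum_{k\ge 0}(k+1)x^{k}$.) Specializing to $m=n-1$, the coefficient sequence of $i_n$ is exactly $1,2,\dots,n-1,n,n-1,\dots,2,1$: strictly increasing up to the central term $n$ at degree $n-1$ and strictly decreasing afterward, and $n$ is attained exactly once, since $k+1=2m+1-k$ forces $k=m$.

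The conclusion is then immediate. The previous pattern $i_{n-1}$ corresponds to $m=n-2$, so its largest coefficient is $n-1$; hence $U_n(2)$ consists of those coefficients of $i_n$ that exceed $n-1$. From the explicit sequence above, the only coefficient larger than $n-1$ is the unique peak $n$, so $U_n(2)=(n)$ and therefore $g_2(i_n)=n$. (The base case $n=1$ is degenerate: $i_1=(1)^2$ has the single coefficient $1$, and $g_2(i_1)=1$.)

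The one point that must be handled with care --- and the reason $l=2$ is genuinely simpler than $l=3,4$ --- is the strict unimodality with a \emph{single} maximal term: it is exactly this that collapses $U_n(2)$ to one number rather than a longer tuple such as $(3,3)$ or $(6,7,6)$. So I would make sure the explicit coefficient formula is recorded precisely enough to see that the maximum is attained at exactly one degree; the remaining work is only the bookkeeping of the index shift between "$n$ summands inside the parentheses" and "degree $n-1$".
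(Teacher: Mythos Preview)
Your argument is correct and is considerably more rigorous than the paper's own proof. The paper disposes of this proposition in one sentence --- essentially ``the $U_r(2)$ are consecutive positive integers'' --- treating it as an empirical observation read off from the table rather than something to be derived. You instead (i) make explicit the indexing convention (that $i_n$ corresponds to $n$ summands, i.e.\ the square of a degree-$(n-1)$ polynomial), (ii) compute the coefficient sequence of $\bigl(\sum_{j=0}^{m}x^{j}\bigr)^{2}$ exactly as $1,2,\dots,m+1,\dots,2,1$ via a counting argument or the generating-function identity, and (iii) use the strict unimodality with a unique peak to show that exactly one coefficient --- namely $n$ --- is new relative to the previous row. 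What your approach buys is an actual proof: you explain \emph{why} $U_n(2)$ is a singleton rather than a longer tuple, and you verify the index shift carefully. What the paper's approach buys is brevity, at the cost of leaving the reader to supply all of the content you have written out.
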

\begin{proof}
Note that this is very straightforward. The set of $U_r(2)$ are consecutive positive integers.
\end{proof}

\begin{prop}
The relation function to evaluate the first and last unique coefficients for $l=3$ is
\[\ g_3(i_{n_(1,n)})=\frac{n(n+1)}{2}\\\]
\end{prop}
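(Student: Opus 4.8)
The plan is to reduce the first entry of $U_n(3)$ to a single coefficient of $(1+x+\cdots+x^{n-1})^3$ and then evaluate that coefficient by stars and bars (equivalently, by a truncated geometric series).

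First I would pin down which coefficients make up $U_n(3)$. For a degree $d\le n-2$, the coefficient of $x^{d}$ in $(1+x+\cdots+x^{n-1})^3$ is unaffected by the presence or absence of the monomial $x^{n-1}$ in the base, so it equals the coefficient of $x^{d}$ in $(1+x+\cdots+x^{n-2})^3$; hence every such coefficient already occurs in the preceding coefficient pattern and is excluded from $U_n(3)$. Since a product of palindromic polynomials is palindromic, $(1+x+\cdots+x^{n-1})^3$ is symmetric about degree $\tfrac{3(n-1)}{2}$, so the top $n-1$ coefficients repeat the bottom $n-1$ and are likewise excluded. This leaves exactly the coefficients of $x^{n-1},x^{n},\dots,x^{2(n-1)}$, which matches the displayed patterns (an $n$-term row); in particular the first entry of $U_n(3)$ is the coefficient of $x^{n-1}$ in $(1+x+\cdots+x^{n-1})^3$, and the last entry equals it by the same symmetry.

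Next I would evaluate $c_n:=[x^{n-1}](1+x+\cdots+x^{n-1})^3$. Directly, $c_n$ counts the ordered triples $(a,b,c)$ of nonnegative integers with $a+b+c=n-1$ and $0\le a,b,c\le n-1$; the upper bounds are automatic because the sum equals $n-1$, so stars and bars gives $c_n=\binom{(n-1)+2}{2}=\binom{n+1}{2}=\tfrac{n(n+1)}{2}$. (Equivalently, modulo $x^{n}$ one has $(1+x+\cdots+x^{n-1})^3\equiv(1-x)^{-3}=\sum_{k\ge0}\binom{k+2}{2}x^{k}$, whose coefficient of $x^{n-1}$ is $\binom{n+1}{2}$.) Combined with the symmetry observation, this finishes the proof.

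I expect the only delicate point to be the first step: cleanly arguing which window of coefficients constitutes $U_n(3)$ — that nothing outside degrees $n-1,\dots,2(n-1)$ contributes a genuinely new value and that this window has the claimed length $n$. Once that identification is secured, the arithmetic is a one-line stars-and-bars count, and palindromic symmetry disposes of the last coefficient at no extra cost.
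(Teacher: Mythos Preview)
Your argument is correct and considerably more rigorous than the paper's. The paper's own proof consists only of listing the first three values $1,3,6$ and asserting that in general $g_3(i_{n_1})=\sum_{j=1}^{n}j=\tfrac{n(n+1)}{2}$; no justification is given beyond recognizing the pattern of triangular numbers. In particular, the paper never identifies \emph{which} coefficient in the expansion is the first unique one, nor why it equals a triangular number.

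Your route is genuinely different: you first pin down the window of degrees $n-1,\dots,2(n-1)$ as the unique coefficients by comparing $(1+x+\cdots+x^{n-1})^3$ with the preceding case and invoking palindromic symmetry, and then evaluate the coefficient of $x^{n-1}$ via stars and bars (or equivalently $(1-x)^{-3}$). What this buys you is an actual proof rather than an extrapolation from three data points, and the method scales cleanly to higher $l$ (the first unique coefficient for general $l$ is $[x^{n-1}](1-x)^{-l}=\binom{n+l-2}{l-1}$, which immediately recovers Proposition~2.4 as well). What the paper's approach buys is brevity, at the cost of logical completeness. Your self-identified delicate point---that the middle $n$ coefficients are exactly the ``new'' ones---is indeed the only place requiring care, and your argument for it (low-degree agreement plus palindromic symmetry) is sound; the paper sidesteps this entirely by taking the tabulated $U_r(3)$ as given.
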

\begin{proof}
Note that $g_3(i_{1_1})=1$, $g_3(i_{2_1})=3$, $g_3(i_{3_1})=6$. So in general we see that, $g_3(i_{n_1})=\sum\limits_{j=1}^{n} j=\frac{n(n+1)}{2}\\$
\end{proof}

\begin{prop}
The relation function to evaluate the unique coefficients for $l=3$ when ($k\neq1,n$) is
\[\ g_3(i_{n_k})=\frac{2k(n+1)-2k^2+n(n-1)}{2}\\\]
\end{prop}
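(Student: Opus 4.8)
The plan is to turn the combinatorial description of the pattern into a coefficient-extraction problem and then evaluate that coefficient by elementary counting. Unwinding the definition of the unique coefficient pattern, $U_n(3)$ is the block of $n$ ``central'' coefficients — those of $x^{n-1}, x^{n}, \ldots, x^{2n-2}$ — in the expansion of $(x^{n-1}+\cdots+x+1)^3$, since these are exactly the coefficients this pattern contributes beyond the preceding one. Hence its $k$-th entry satisfies
\[
g_3(i_{n_k}) = [x^{\,n+k-2}]\,(x^{n-1}+\cdots+x+1)^3 = \#\{(a,b,c)\in\mathbb{Z}^3 : 0\le a,b,c\le n-1,\ a+b+c=n+k-2\}.
\]
For $k=1$ and $k=n$ this reproduces the count behind Proposition~2.2, so the two propositions agree; I will use this freely and concentrate on the range $2\le k\le n-1$.

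Next I would evaluate the triple-count by inclusion--exclusion on the upper-bound constraints $a,b,c\le n-1$. Ignoring those constraints, the number of nonnegative integer solutions of $a+b+c=n+k-2$ is $\binom{n+k}{2}$. For a fixed variable, the solutions in which it is $\ge n$ are in bijection with the nonnegative solutions of a sum equal to $k-2$, of which there are $\binom{k}{2}$. The point requiring care is that two variables cannot both be $\ge n$: that would force $a+b+c\ge 2n$, impossible because $n+k-2<2n$ whenever $k\le n$. Thus there is no second-order correction, and
\[
g_3(i_{n_k}) = \binom{n+k}{2}-3\binom{k}{2}.
\]
The remaining step is pure algebra: expanding, $\binom{n+k}{2}-3\binom{k}{2}=\tfrac12\big[(n+k)(n+k-1)-3k(k-1)\big]=\tfrac12\big(n^2-n+2kn+2k-2k^2\big)$, which is exactly $\dfrac{2k(n+1)-2k^2+n(n-1)}{2}$, as claimed.

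An alternative route, more in the spirit of Propositions~2.1 and~2.2 and useful as a cross-check against the displayed table, is to prove directly that consecutive entries of a row satisfy $g_3(i_{n_{k+1}})-g_3(i_{n_k})=n-2k$ (by comparing the triple-counts for target sums $n+k-2$ and $n+k-1$), and then telescope from the known first entry $g_3(i_{n_1})=\tfrac{n(n+1)}{2}$ of Proposition~2.2; summing yields $g_3(i_{n_k})=\tfrac{n(n+1)}{2}+(k-1)(n-k)$, which rearranges to the same closed form. Either way, the only genuine obstacle is the inclusion--exclusion bookkeeping — confirming that for $2\le k\le n-1$ at most one of $a,b,c$ can exceed $n-1$ — after which the proof is a routine simplification.
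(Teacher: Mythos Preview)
Your argument is correct and considerably more rigorous than the paper's. You first pin down the combinatorial meaning of $g_3(i_{n_k})$ as the coefficient $[x^{n+k-2}](1+x+\cdots+x^{n-1})^3$, then evaluate it by a clean inclusion--exclusion, obtaining the closed form $\binom{n+k}{2}-3\binom{k}{2}$ and checking it matches the stated expression. The key observation that at most one of $a,b,c$ can exceed $n-1$ (since $n+k-2<2n$ for $k\le n$) is exactly the point that makes the inclusion--exclusion terminate, and you flag it. Your alternative telescoping route via $g_3(i_{n_{k+1}})-g_3(i_{n_k})=n-2k$ is also correct and gives a within-row recursion.

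The paper proceeds quite differently: it \emph{observes} (without derivation) a cross-row recursion relating $g_3(i_{n_k})$ to the first entry $g_3(i_{(n-k+1)_1})$ of an earlier row plus an arithmetic correction, and then sums. That argument is essentially empirical pattern-matching and never says why the recursion holds. What your approach buys is an actual proof: the coefficient-extraction viewpoint explains the numbers rather than just fitting them, and the inclusion--exclusion yields the closed form in one step. What the paper's approach buys is that it stays purely at the level of the tabulated numbers, requiring no generating functions or counting, and it ties the general $k$ case back to the already-handled $k=1$ case of Proposition~2.2; but as written it is a heuristic, not a proof.
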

\begin{proof}
Note that
\[\ g_3(i_{n_2})=g_3(i_{{n-1}_1})+2i_{n-1}\]
Similarly,
\[\ g_3(i_{n_3})=g_3(i_{{n-2}_1})+2i_{n-2}+2i_{n-2}+1\]
Also,
\[\ g_3(i_{n_4})=g_3(i_{{n-3}_1})+2i_{n-3}+2i_{n-3}+1+2i_{n-3}+2\]
Hence
\[\ g_3(i_{n_k})=g_3(i_{{n-k+1}_1})+\sum\limits_{m=1}^{k-2} 2i_{n-k+1}+m\]
=
\[\ \frac{2k(n+1)-2k^2+n(n-1)}{2}\\\]
\end{proof}

\begin{prop}
The relation function to evaluate the first and last unique coefficients for $l=4$ is
\[\ g_4(i_{n_(1,n)})= \frac{(n^2+n)(n+2)}{6}\\ \]
\end{prop}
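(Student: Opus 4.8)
The plan is to run the same sort of argument as in Proposition~2.4, but now with the $l=3$ relation function supplying the increments. First I would pin down $g_4(i_{n_1})$ concretely: the first entry of the row-$n$ array for $l=4$ is the coefficient of $x^{n-1}$ in $(x^{n-1}+\cdots+x+1)^4$. This is the first genuinely new coefficient when one passes from the row-$(n-1)$ pattern to the row-$n$ pattern, since $(x^{n-2}+\cdots+1)^4$ and $(x^{n-1}+\cdots+1)^4$ share the same coefficient in every degree at most $n-2$ — a composition of a number $\le n-2$ into four nonnegative parts can never use a part as large as $n-1$ — so all coefficients below degree $n-1$ are inherited from the previous pattern. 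Since $n-1$ does not exceed the largest exponent $n-1$ occurring in $x^{n-1}+\cdots+1$, the upper bound on the exponents is inactive at this degree, so $g_4(i_{n_1})$ simply counts ordered $4$-tuples of nonnegative integers summing to $n-1$; the first values $1,4,10,20,\dots$ agree with the displayed table.

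Next I would turn this into an accumulation in $n$. Writing $[x^{m}]P$ for the coefficient of $x^{m}$ in a polynomial $P$ and extracting the coefficient of $x^{n-1}$ from the product $(x^{n-1}+\cdots+1)^3(x^{n-1}+\cdots+1)$, one gets
\[
 g_4(i_{n_1})=[x^{n-1}]\bigl(x^{n-1}+\cdots+1\bigr)^4=\sum_{a=0}^{n-1}[x^{a}]\bigl(x^{n-1}+\cdots+1\bigr)^3 .
\]
For each $a\le n-1$ the exponent cap is again inactive, so $[x^{a}]\bigl(x^{n-1}+\cdots+1\bigr)^3$ equals the coefficient of $x^{a}$ in $\bigl(x^{a}+\cdots+1\bigr)^3$, that is, the first entry of the row-$(a+1)$ array for $l=3$, which by Proposition~2.4 is $g_3(i_{(a+1)_1})=\frac{(a+1)(a+2)}{2}$. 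Reindexing $j=a+1$ gives the clean $l=4$ analogue of the identity $g_3(i_{n_1})=\sum_{j=1}^{n}j$, namely
\[
 g_4(i_{n_1})=\sum_{j=1}^{n}g_3(i_{j_1})=\sum_{j=1}^{n}\frac{j(j+1)}{2} .
\]

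Finally I would evaluate the sum. With $\sum_{j=1}^{n}j^2=\frac{n(n+1)(2n+1)}{6}$ and $\sum_{j=1}^{n}j=\frac{n(n+1)}{2}$,
\[
 \sum_{j=1}^{n}\frac{j(j+1)}{2}=\frac12\left(\frac{n(n+1)(2n+1)}{6}+\frac{n(n+1)}{2}\right)=\frac{n(n+1)}{12}(2n+4)=\frac{n(n+1)(n+2)}{6}=\frac{(n^2+n)(n+2)}{6},
\]
which is the asserted value; the same answer also drops out at once from the hockey-stick identity $\sum_{a=0}^{n-1}\binom{a+2}{2}=\binom{n+2}{3}$. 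The claim for the last unique coefficient then follows from the palindromic symmetry of the coefficient list of $(x^{n-1}+\cdots+1)^4$. I expect the only delicate step to be the first one: carefully identifying which coefficient ``the first unique coefficient'' is, and checking that the exponent cap in $x^{n-1}+\cdots+1$ never bites in the relevant range of degrees; once that is settled, the rest is the bookkeeping above together with Proposition~2.4.
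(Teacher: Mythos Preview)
Your argument is correct and lands on the same key identity the paper uses, namely $g_4(i_{n_1})=\sum_{j=1}^{n}\frac{j(j+1)}{2}$, followed by the closed-form evaluation and the appeal to palindromic symmetry for the last coefficient. The difference is one of rigor: the paper's proof simply lists the values $1,4,10,\dots$ and asserts the summation formula by inspection, whereas you actually \emph{derive} it by identifying $g_4(i_{n_1})$ as the coefficient of $x^{n-1}$ in $(1+x+\cdots+x^{n-1})^4$, splitting off one factor, and recognizing each summand as $g_3(i_{j_1})$ via the $l=3$ result. What this buys you is a genuine proof rather than a pattern observation; the paper's version is shorter but is really just a claim.

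One small correction: the proposition you cite for $g_3(i_{j_1})=\frac{j(j+1)}{2}$ is Proposition~2.2 in the paper's numbering, not Proposition~2.4 (which is the statement you are proving). Fix the cross-reference and the write-up is fine.
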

\begin{proof}
Note that $g_4(i_{1_1})=1$, $g_4(i_{2_1})=4$, $g_4(i_{3_1})=10$, and so on. In general we see that, $g_4(i_{n_1})=\sum\limits_{j=1}^{n} \frac{j(j+1)}{2}= \frac{(n^2+n)(n+2)}{6}\\$. By symmetry, the same holds for $g_4(i_{n_n})$.
\end{proof}

\begin{prop}
The relation function to evaluate the unique coefficients for $l=4$ conditioned that $n\neq1,2,3,...,(k-1)$ is
\[\ g_4(i_{n_k})= \frac{(k-1)[((n^2+n)+nk)-k(k+1)]}{2}\\ \]
\end{prop}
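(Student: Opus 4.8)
The plan is to first give the displayed pattern a closed combinatorial description and then reduce the claim to an elementary binomial identity. I would show that ``row $n$'' of the $l=4$ pattern consists exactly of the central $2n-1$ coefficients of $P_{n}(x):=(1+x+\cdots+x^{n-1})^{4}$, so that the entry in column $k$ is $g_{4}(i_{n_{k}})=[x^{n+k-2}]P_{n}(x)$, with column $1$ sitting at the exponent $n-1$ and column $2n-1$ at the exponent $3n-3$ (one checks this against the displayed rows $n=2,3,4$). Granting that, compute the coefficient from $P_{n}(x)=(1-x^{n})^{4}(1-x)^{-4}$, i.e. $[x^{t}]P_{n}=\sum_{i\ge 0}(-1)^{i}\binom{4}{i}\binom{t-in+3}{3}$: under the hypothesis $n\ge k$ (equivalently $t=n+k-2\le 2n-2$) only the terms $i=0$ and $i=1$ survive, so $g_{4}(i_{n_{k}})=\binom{n+k+1}{3}-4\binom{k+1}{3}$. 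Subtracting the column-$1$ value $g_{4}(i_{n_{1}})=\binom{n+2}{3}$ furnished by the preceding proposition and expanding the three cubics, the difference collapses to $\tfrac12(k-1)\bigl[(n^{2}+n)+nk-k(k+1)\bigr]$, which is the asserted relation function; the left--right symmetry of each row then extends it to the mirror columns.

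A route closer in spirit to the $l=3$ proofs is to telescope a first-difference recursion. With $Q_{n}(x)=(1+\cdots+x^{n-1})^{3}$ and $P_{n}=Q_{n}\cdot(1+\cdots+x^{n-1})$ one has $[x^{t}]P_{n}-[x^{t-1}]P_{n}=[x^{t}]Q_{n}-[x^{t-n}]Q_{n}$, and with $t=n+k-2$ this reads
\[ g_{4}(i_{n_{k}})-g_{4}(i_{n_{k-1}})=g_{3}(i_{n_{k}})-g_{3}(i_{{k-1}_1})=\binom{n+k}{2}-4\binom{k}{2}, \]
where both right-hand terms are read off from the already-proved $l=3$ propositions. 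Telescoping from $k=1$, where $g_{4}(i_{n_{1}})=\tfrac{(n^{2}+n)(n+2)}{6}$ is known, and summing the resulting quadratic in $k$ (two hockey-stick identities) again yields the stated formula; this is the version I would actually write up, since it invokes only results established earlier.

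The step needing genuine care is confirming that row $n$ is exactly the block of $2n-1$ central coefficients of $P_{n}$, i.e. the coefficients appearing in $P_{n}$ but in no $P_{m}$ with $m<n$. One half is immediate: for $0\le t\le n-2$ the expansion has not yet felt the $1-x^{n}$ factor, so $[x^{t}]P_{n}=\binom{t+3}{3}=[x^{t}]P_{n-1}$, hence the outer coefficients (and their mirrors) are old. The other half---that each of the $2n-1$ central values is new---cannot be obtained from ``bigger than the previous maximum'', since already at $n=5$ the edge value $\binom{7}{3}=35$ is smaller than $\max U_{4}(4)=44$; one must instead show the value list $\{\binom{n+k+1}{3}-4\binom{k+1}{3}\}_{1\le k\le 2n-1}$ meets the corresponding list for $n-1$ in no point, an injectivity estimate for that cubic-in-$k$ expression. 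If one simply takes ``the pattern occupies the exponents $n-1,\dots,3n-3$'' as the working definition---consistent with every example given---this obstacle evaporates and nothing remains but the routine binomial bookkeeping above.
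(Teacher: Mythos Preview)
Your proposal is correct and reaches the same endpoint as the paper, but by a substantially different and more rigorous route. The paper argues by pure pattern-spotting: it tabulates the column differences $g_4(i_{n_k})-g_4(i_{n_{k-1}})$ for $k=2,3,4$, guesses the recursion $g_4(i_{n_k})=g_4(i_{n_{k-1}})+\sum_{j=1}^{n}(j+k-1)-\sum_{j=k+1}^{2k-1}j$ without proof, and telescopes. Your second approach telescopes an equivalent recursion but \emph{derives} it from $(1-x)P_n=(1-x^n)Q_n$, so each step rests on the already-established $l=3$ formulas rather than on an unverified pattern; your first approach bypasses recursion entirely via $(1-x^n)^4(1-x)^{-4}$. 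Either of your arguments is cleaner and generalizes immediately to higher $l$, whereas the paper's method would require fresh guessing at each power. You also raise, and correctly assess, the question of whether the ``unique'' coefficients really coincide with the $2n-1$ central ones---a point the paper never addresses. One remark worth making explicit: you show that the displayed expression $\tfrac12(k-1)\bigl[(n^2+n)+nk-k(k+1)\bigr]$ equals the \emph{difference} $g_4(i_{n_k})-g_4(i_{n_1})$ rather than $g_4(i_{n_k})$ itself (indeed the factor $k-1$ forces it to vanish at $k=1$). The paper's own proof carries the term $\sum_{j=1}^{n}\tfrac{j(j+1)}{2}=g_4(i_{n_1})$ through to its penultimate line and then silently drops it in the final simplification, so this discrepancy originates in the paper, not in your argument.
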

\begin{proof}
Note that
\[\ g_4(i_{2_2})=g_4(i_{2_1})+i_2\],
\[\ g_4(i_{3_2})=g_4(i_{3_1})+i_2+4\],
\[\ g_4(i_{4_2})=g_4(i_{4_1})+i_2+4+5\]
and so on. Thus, in general
\[\ g_4(i_{n_2})=g_4(i_{n_1})+\sum\limits_{j=1}^{n} (j+1) - 3\]=
\[\sum\limits_{j=1}^{n} \frac{j(j+1)}{2}\\+\sum\limits_{j=1}^{n} (j+1) - 3\] for $n\neq1$.
Similarly,
\[\ g_4(i_{3_3})=g_4(i_{3_2})+i_3\],
\[\ g_4(i_{4_3})=g_4(i_{4_2})+i_3+6\],
\[\ g_4(i_{5_3})=g_4(i_{5_2})+i_3+6+7\]
Thus,
\[\ g_4(i_{n_3})=g(i_{n_2})+\sum\limits_{j=1}^{n} (j+2) - (4+5)\]=
\[\sum\limits_{j=1}^{n} \frac{j(j+1)}{2}\\+\sum\limits_{j=1}^{n} (j+1) - 3+\sum\limits_{j=1}^{n} (j+2) - (4+5)\] for $n\neq1,2$
Also observe that
\[\ g_4(i_{4_4})=g_4(i_{4_3})+i_4\],
\[\ g_4(i_{5_4})=g_4(i_{5_3})+i_4+8\],
\[\ g_4(i_{6_4})=g_4(i_{6_3})+i_4+8+9\]
Thus,
\[\ g_4(i_{n_4})=g_4(i_{n_3})+\sum\limits_{j=1}^{n} (j+3) - (5+6+7)\]=
\[\ \sum\limits_{j=1}^{n} \frac{j(j+1)}{2}\\+\sum\limits_{j=1}^{n} (j+1) - 3+\sum\limits_{j=1}^{n} (j+2) - (4+5)+\sum\limits_{j=1}^{n} (j+3) - (5+6+7)\] for $n\neq1,2,3$
In general,
\[\ g_4(i_{n_k})=\sum\limits_{j=1}^{n} \frac{j(j+1)}{2}\\+\frac{(k-1)(n^2+n)}{2}\\+n(\sum\limits_{j=1}^{k-1} j\\) - \sum_{k=2}^{k}(\sum_{j=k+1}^{2k-1} j)\\\]=
\[\ g_4(i_{n_k})= \frac{(k-1)[((n^2+n)+nk)-k(k+1)]}{2}\\ \]
\end{proof}
\begin{rem}
Note that the restriction $n\neq1,2,3,...,(k-1)$ does not prevent us from finding the unique coefficients for these values of $n$ due to the symmetry in the set of $U_r(k)$.
\end{rem}

\section{Applications}
The applications of relation functions could be seen in finding coefficients of polynomial expansions. Note that one can evaluate any desired coefficient of any polynomial expansion by using the multinomial theorem. However, the relation functions that are derived in this paper could also be used.
\begin{exmp}
Evaluate the coefficients of $(x^2+x+1)^3$ using relation functions.
\end{exmp}
Note that
\[\ g_3(i_{1_1})=\frac{(i_1)(i_2)}{2}\\=1\],
\[\ g_3(i_{2_1})=\frac{2(2+1)-2^2+2(2-1)}{2}\\=3\],
\[\ g_3(i_{3_1})=\frac{2(3+1)-2^2+3(3-1)}{2}\\=6\],
\[\ g_3(i_{3_2})=\frac{4(3+1)-4^2+3(3-1)}{2}\\=7\].

Thus, by symmetry, it is sufficient to find the first 4 coefficients. As a result, the coefficients of this expansion will be $1, 3, 6, 7, 6, 3, 1$
\begin{exmp}
Consider the polynomial,
 \[\ (2x+1)(x^4+x^3+x^2+x+1)^3\]
Find the coefficient of $x^9$ using relation functions and your intuition
\end{exmp}
Since we are multiplying by $(2x+1)$, we only consider the coefficients of $x^9$ and $x^8$ in the expansion of the right hand side. Therefore we want to evaluate $g_3(i_{4_1})$ and $g_3(i_{5_1})$. So,
\[\ g_3(i_{4_1})=\frac{2(4+1)-2^2+4(4-1)}{2}\\=10\]
and
\[\ g_3(i_{5_1})=\frac{2(5+1)-2^2+5(5-1)}{2}\\=15\]
Thus, the coefficient of $x^9$ is $10+30=40$.

\section{Conclusion}
In this paper, we demonstrated a pattern that occurs from the set of unique coefficients for polynomials in the form of $f(x)=(x^n+...+1)^l$. However, we only considered the cases when $l=2,3,4$. It should be noted that the pattern we generated is different from pascal's triangle and its generalizations. Moreover, as a continuation of this paper, higher powers of $l$ could also be observed in a similar manner to obtain one single unifying equation for $g_l(i_{n_k})$. Finally as a result of this paper, along with a unique pattern, a new method to evaluate the coefficients of such polynomials are generated independently from binomial and multinomial theorem. 
\section{Acknowledgements}
I would like to present my appreciation and thank my adviser Professor Richards Geordie for his evaluations and suggestions. Moreover, I would like to thank my peer mentor Eyub Yegen for being a source of motivation and inspiration. Most importantly, I would like to thank my father Seyit Sirin for deeply supporting me through this whole process.

\end{document}